\newtheorem{theorem}{Theorem}[section]
\newtheorem{lemma}[theorem]{Lemma}
\newtheorem{prop}[theorem]{Proposition}
\newtheorem{cor}[theorem]{Corollary}
\theoremstyle{definition}
\newtheorem{example}[theorem]{Example}
\theoremstyle{remark}
\newtheorem{remark}[theorem]{\bf{Remark}}
\numberwithin{equation}{section}
\begin{document}

\title [Numerical radius inequalities]{Proper improvement of well-known numerical radius inequalities and their applications }

\author[P. Bhunia and K. Paul]{Pintu Bhunia and Kallol Paul}

\address{(Bhunia) Department of Mathematics, Jadavpur University, Kolkata 700032, India}
\email{pintubhunia5206@gmail.com}

\address{(Paul) Department of Mathematics, Jadavpur University, Kolkata 700032, India}
\email{kalloldada@gmail.com;kallol.paul@jadavpuruniversity.in}

%\thanks will become a 1st page footnote.
\thanks{First author  would like to thank UGC, Govt. of India for the financial support in the form of SRF. Prof. Kallol Paul would like to thank RUSA 2.0, Jadavpur University for the partial support.}
\thanks{}
\thanks{}
%    Information for second author

%    General info

\subjclass[2010]{ 47A12, 15A60, 26C10.}
\keywords{ Numerical radius, Operator norm, Hilbert space, Bounded linear operator, zeros of polynomial.}

%\date{}
\maketitle

\begin{abstract}
New inequalities for the numerical radius of bounded linear operators defined on a complex Hilbert space $\mathcal{H}$ are given. In particular, it is established that  if $T$ is a bounded linear operator on a Hilbert space $\mathcal{H}$ then
\[ w^2(T)\leq \min_{0\leq \alpha \leq 1} \left \| \alpha T^*T +(1-\alpha)TT^* \right \|,\] where $w(T)$ is the numerical radius of $T.$  The inequalities obtained here are non-trivial improvement of the well-known numerical radius inequalities. As an application  we estimate bounds  for the zeros of a complex monic polynomial.

\end{abstract}

\section{Introduction}

\noindent Let $\mathcal{B}(\mathcal{H})$ denote the $\mathbb{C}^*$-algebra of all bounded linear operators defined on a complex Hilbert space $\mathcal{H}$ with inner product $\langle.,.\rangle$. For  $T \in \mathcal{B}(\mathcal{H})$, $T^*$ denotes the adjoint of $T$. Also  $|T|, |T^*|$ denote the positive  operators  $(T^*T)^{\frac{1}{2}}, (TT^*)^{\frac{1}{2}}$ respectively. The alphabet `` $O$ '' stands for the zero operator on $\mathcal{H}$. The numerical range of $T$, denoted by $W(T),$ is defined as  \[W(T)=\{ \langle Tx,x\rangle~~:~~x\in \mathcal{H}, ~~\|x\|=1 \}.\]
Two important constants associated with numerical range of $T$ are the numerical radius $w(T)$ of $T$ and the Crawford number $c(T)$ of $T$,  which are defined respectively as 
\begin{eqnarray*}
w(T)=\sup \left \{ ~~|\lambda|~:~\lambda \in W(T)~~\right \} ~~\mbox{and} ~~c(T)=\inf \left \{~~|\lambda|~:~\lambda \in W(T)~~\right \}.
\end{eqnarray*}
For  $T \in \mathcal{B}(\mathcal{H})$, let $\|T\|$ be the operator norm of $T$.  The numerical radius satisfies the following well-known inequality 
\begin{eqnarray}\label{eqv}
\frac{1}{2}\|T\|\leq w(T)\leq \|T\|.
\end{eqnarray}
The spectral radius of $T,$ denoted as $r(T)$, is defined as the radius of the smallest circle with center at origin containing the spectrum $ \sigma(T)$ of the operator $T$.  It is well-known that closure of the numerical range contains the spectrum and so $ r(T) \leq w(T).$ 
The inequality  (\ref{eqv}) is sharp,  $w(T) = \|T\|$ if $T$ is normal and $ w(T)=\frac{1}{2}\|T\| $ if $T^2=O.$
Over the years, various numerical radius inequalities have been obtained to improve on the inequality (\ref{eqv}). Interested readers  can look into  \cite{abu,bag,bhunia1,bhunia2,kittaneh1,kittaneh2} and the references therein for more information on recent advances in numerical radius  inequalities.

In this paper, we establish some new inequalities for the numerical radius of bounded linear operators. In particular we obtain the inequalities 
\[ w^2(T)\leq \min_{0\leq \alpha \leq 1} \left \| \alpha |T|^2 +(1-\alpha)|T^*|^2 \right \|,\] 
\[w^{2}(T)\leq \min_{0\leq \alpha \leq 1}  \left \{\frac{\alpha}{2} w(T^2) +\left \| \frac{\alpha}{4}   |T|^{2}+(1-\frac{3}{4}\alpha)  |T^*|^{2}  \right \| \right \} .\]
We show that the inequalities obtained here greneralize and improve on the existing well-known inequalities given in \cite{abu,kittaneh1,kittaneh2}. As an application of the numerical radius inequalities obtained here we give a better estimation of the bounds for the zeros of a complex monic polynomial.

 \section{Main Results}

 We begin this section with the following proposition that gives an inequality involving the operator norm and the Crawford number of bounded linear operators.

\begin{prop}\label{prop1}
Let $T\in \mathcal{B}(\mathcal{H})$. Then the following inequality holds.
\[\|T\|^2+\max \left \{c(|T|^2), c(|T^*|^2)  \right \}\leq \|T^*T+TT^*\|.\]
\end{prop}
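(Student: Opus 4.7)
The inequality to prove, rewritten with $A := |T|^2 = T^*T$ and $B := |T^*|^2 = TT^*$, reads
\[
\|A\| + \max\{c(A),\, c(B)\} \leq \|A+B\|.
\]
Both $A$ and $B$ are positive operators with $\|A\| = \|B\| = \|T\|^{2}$, so the plan is to exploit that for a positive operator $P$ one has $\|P\| = \sup_{\|x\|=1} \langle Px,x\rangle$ while $c(P) = \inf_{\|x\|=1}\langle Px,x\rangle$, and that $A+B$ is itself positive.

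First I would handle the ``max'' by noting that it suffices to prove the two separate inequalities $\|T\|^2 + c(A) \leq \|A+B\|$ and $\|T\|^2 + c(B) \leq \|A+B\|$, and then take the larger of $c(A)$ and $c(B)$. By symmetry the two cases are treated identically, so I would just do the first.

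To prove $\|T\|^2 + c(A) \leq \|A+B\|$, I would choose a sequence of unit vectors $\{y_n\}$ along which $\langle By_n,y_n\rangle \to \|B\| = \|T\|^2$, which exists because $\|B\| = \sup_{\|y\|=1}\langle By,y\rangle$. For every $n$ we have $\langle Ay_n, y_n\rangle \geq c(A)$ by definition of the Crawford number, and since $A+B$ is positive self-adjoint,
\[
\|A+B\| \;\geq\; \langle (A+B)y_n, y_n\rangle \;=\; \langle Ay_n, y_n\rangle + \langle By_n, y_n\rangle \;\geq\; c(A) + \langle By_n, y_n\rangle.
\]
Letting $n \to \infty$ yields $\|A+B\| \geq c(A) + \|T\|^{2}$. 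The symmetric argument with a maximizing sequence for $A$ gives $\|A+B\| \geq c(B) + \|T\|^{2}$, and combining the two bounds produces the claimed inequality.

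There is no serious obstacle here; the content is really just the observation that when one takes a near-maximizer $y_n$ for one of the two positive summands, the other summand is still bounded below by its Crawford number, and positivity lets these two lower bounds be added inside $\langle (A+B)y_n,y_n\rangle$. The only point requiring a word of care is that the sequence $\{y_n\}$ is \emph{not} in general also a near-minimizer for $A$, so one must not attempt to take the limit of $\langle Ay_n, y_n\rangle$; the argument only needs the uniform lower bound $\langle Ay_n,y_n\rangle \geq c(A)$, which holds for every unit vector.
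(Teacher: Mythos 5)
Your proof is correct and is essentially the paper's own argument, just written out in full: the paper simply records the identity $\|Tx\|^2+\|T^*x\|^2=\langle (T^*T+TT^*)x,x\rangle\leq\|T^*T+TT^*\|$ for unit vectors $x$ and leaves the choice of a near-maximizing sequence for one summand, together with the Crawford-number lower bound for the other, implicit. Your write-up supplies exactly those omitted details, including the correct caution that one only needs the uniform bound $\langle Ay_n,y_n\rangle\geq c(A)$ rather than convergence of that term.
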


\begin{proof}
The proof follows from the observation that $ \forall x \in \mathcal{H}$ with $  \|x\|=1 $ we have,
 $ \|Tx\|^2+\|T^*x\|^2=\langle (T^*T+TT^*)x,x\rangle \leq \|T^*T+TT^*\|.$ 
\end{proof}

To proceed further we need the following lemmas.

\begin{lemma}$($\cite[pp. 75-76]{halmos}$)$\label{lemma1}
Let $T\in \mathcal{B}(\mathcal{H})$ and let $x\in \mathcal{H}$. Then 
\[|\langle Tx,x\rangle|\leq \langle |T|x,x\rangle^{1/2}~~\langle |T^*|x,x\rangle^{1/2}.\]
\end{lemma}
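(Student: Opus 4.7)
The plan is to exploit the polar decomposition of $T$. Write $T = U|T|$, where $U \in \mathcal{B}(\mathcal{H})$ is the partial isometry whose initial space is $\overline{\operatorname{range}(|T|)}$ and whose final space is $\overline{\operatorname{range}(T)}$. A standard consequence of this decomposition is the identity $|T^*| = U|T|U^*$, which is the bridge that lets us convert an expression involving $|T|$ into one involving $|T^*|$.

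The key steps are short. First, split the positive operator as $|T| = |T|^{1/2} \cdot |T|^{1/2}$ and rewrite
\[\langle Tx, x\rangle = \langle U |T|^{1/2} \cdot |T|^{1/2} x, x\rangle = \langle |T|^{1/2} x, \, |T|^{1/2} U^* x\rangle.\]
Second, apply the ordinary Cauchy--Schwarz inequality in $\mathcal{H}$ to obtain
\[|\langle Tx, x\rangle| \leq \bigl\| |T|^{1/2} x \bigr\| \cdot \bigl\| |T|^{1/2} U^* x \bigr\|.\]
Third, compute the two factors: $\||T|^{1/2} x\|^2 = \langle |T| x, x\rangle$, while
\[\bigl\| |T|^{1/2} U^* x \bigr\|^2 = \langle |T|\, U^* x, U^* x\rangle = \langle U|T| U^* x, x\rangle = \langle |T^*| x, x\rangle,\]
where in the last equality we invoke $U|T|U^* = |T^*|$. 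Substituting gives exactly the asserted bound.

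The only nontrivial point, and the step I would pause on, is justifying $U|T|U^* = |T^*|$. The operator $U|T|U^*$ is manifestly positive, and its square is
\[(U|T|U^*)^2 = U|T|(U^*U)|T|U^* = U|T|^2 U^* = TT^* = |T^*|^2,\]
because $U^*U$ is the orthogonal projection onto $\overline{\operatorname{range}(|T|)}$ and therefore acts as the identity on vectors of the form $|T|y$. Uniqueness of the positive square root of $TT^*$ then forces $U|T|U^* = |T^*|$, completing the argument. Everything else is a one-line application of Cauchy--Schwarz to a cleverly inserted $|T|^{1/2}\cdot|T|^{1/2}$ factorization.
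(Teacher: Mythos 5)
Your proof is correct. The paper itself does not prove this lemma; it is quoted from Halmos's \emph{A Hilbert Space Problem Book} with only a citation, and the argument you give---insert $|T|^{1/2}\cdot|T|^{1/2}$ into the polar decomposition $T=U|T|$, apply Cauchy--Schwarz, and identify $U|T|U^*=|T^*|$ via positivity and uniqueness of the square root of $TT^*$---is precisely the classical proof found in that source. All steps check out, including the one you flag: $U^*U$ acts as the identity on $\overline{\operatorname{range}(|T|)}$, so $(U|T|U^*)^2=U|T|^2U^*=TT^*$ as claimed.
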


\begin{lemma}$($\cite[p. 20]{simon}$)$\label{lemma2}
Let $T\in \mathcal{B}(\mathcal{H})$ be positive and let $x\in \mathcal{H}$ with $\|x\|=1$. Then \[\langle Tx,x\rangle^r\leq \langle T^rx,x\rangle, ~~\forall~~r\geq 1.\]
\end{lemma}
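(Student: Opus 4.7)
The plan is to reduce the operator inequality to the classical Jensen inequality by means of the spectral theorem. Since $T \in \mathcal{B}(\mathcal{H})$ is positive, its spectrum $\sigma(T)$ is contained in $[0, \infty)$, and the spectral measure $E$ of $T$ gives rise, for any unit vector $x$, to a Borel probability measure on $\sigma(T)$ defined by $\mu_x(A) := \langle E(A) x, x\rangle$. The normalization $\|x\| = 1$ is exactly what forces $\mu_x$ to have total mass $1$, which will be crucial for applying Jensen.

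Using the continuous functional calculus, one rewrites both sides of the desired inequality as integrals against $\mu_x$:
\[ \langle Tx, x\rangle = \int_{\sigma(T)} \lambda \, d\mu_x(\lambda), \qquad \langle T^r x, x\rangle = \int_{\sigma(T)} \lambda^r \, d\mu_x(\lambda). \]
Since the function $t \mapsto t^r$ is convex on $[0, \infty)$ for $r \geq 1$, Jensen's inequality applied to the probability measure $\mu_x$ yields
\[ \left( \int_{\sigma(T)} \lambda \, d\mu_x(\lambda) \right)^{\!r} \leq \int_{\sigma(T)} \lambda^r \, d\mu_x(\lambda), \]
which is precisely $\langle Tx, x\rangle^r \leq \langle T^r x, x\rangle$.

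There is no substantial obstacle here: once the spectral calculus is invoked, the argument reduces to a one-line application of Jensen's inequality. The only points requiring care are verifying that $\mu_x$ is genuinely a probability measure (which uses $\|x\|=1$) and that the convex function $t \mapsto t^r$ is applied on the correct domain (which uses positivity of $T$, so that $\sigma(T)\subseteq[0,\infty)$ and the integrands are nonnegative). An alternative route avoiding the functional calculus would be to first establish the inequality for integer exponents by a Cauchy--Schwarz induction, using $\langle T^{k+1}x,x\rangle\langle T^{k-1}x,x\rangle\geq \langle T^k x,x\rangle^2$, and then extend to all real $r\geq 1$ by a Löwner--Heinz or interpolation argument; however, the spectral-theoretic proof above handles all $r\geq 1$ uniformly and is considerably cleaner.
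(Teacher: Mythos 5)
Your argument is correct: for a positive operator the spectral theorem turns $\langle f(T)x,x\rangle$ into $\int f\,d\mu_x$ against the probability measure $\mu_x(\cdot)=\langle E(\cdot)x,x\rangle$ (using $\|x\|=1$), and Jensen's inequality for the convex function $t\mapsto t^r$ on $[0,\infty)$ gives exactly the claimed inequality. The paper offers no proof of its own here --- it quotes the result (McCarthy's inequality) from Simon's book --- and the spectral-measure/Jensen argument you give is precisely the standard proof found there, so there is nothing further to compare.
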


Now, we are in a position to present our first theorem.

\begin{theorem}\label{theorem1}
Let $T\in \mathcal{B}(\mathcal{H})$. Then 
\[w^{2r}(T)\leq \left\|\alpha |T|^{2r}+(1-\alpha)|T^*|^{2r}  \right\|,~~\forall~~r\geq 1,~\mbox{and}~~\forall~\alpha,~0\leq \alpha\leq 1.\]

\end{theorem}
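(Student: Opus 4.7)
The plan is to prove the pointwise bound
$|\langle Tx,x\rangle|^{2r}\le \alpha\langle|T|^{2r}x,x\rangle+(1-\alpha)\langle|T^{*}|^{2r}x,x\rangle$
for every unit vector $x$ and every $\alpha\in[0,1]$, and then take the supremum over $\|x\|=1$. The argument splits into three short steps: two Cauchy--Schwarz estimates, one application of Lemma~\ref{lemma2}, and a convexity observation. Notably, Lemma~\ref{lemma1} is \emph{not} required for this particular inequality; it is presumably reserved for the sharper theorems referred to in the abstract.

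First, applying Cauchy--Schwarz to $\langle Tx,x\rangle$ and separately to $\langle x,T^{*}x\rangle=\overline{\langle Tx,x\rangle}$ gives, for any unit $x$, the two bounds $|\langle Tx,x\rangle|^{2}\le\|Tx\|^{2}=\langle|T|^{2}x,x\rangle$ and $|\langle Tx,x\rangle|^{2}\le\|T^{*}x\|^{2}=\langle|T^{*}|^{2}x,x\rangle$. Raising each to the $r$-th power (using $r\ge 1$) and then applying Lemma~\ref{lemma2} to the positive operators $|T|^{2}$ and $|T^{*}|^{2}$ promotes these to $|\langle Tx,x\rangle|^{2r}\le\langle|T|^{2r}x,x\rangle$ and $|\langle Tx,x\rangle|^{2r}\le\langle|T^{*}|^{2r}x,x\rangle$.

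Since a real number that is bounded above by each of two quantities is bounded above by every convex combination of them, for any $\alpha\in[0,1]$ I would deduce
$|\langle Tx,x\rangle|^{2r}\le\alpha\langle|T|^{2r}x,x\rangle+(1-\alpha)\langle|T^{*}|^{2r}x,x\rangle=\langle(\alpha|T|^{2r}+(1-\alpha)|T^{*}|^{2r})x,x\rangle\le\|\alpha|T|^{2r}+(1-\alpha)|T^{*}|^{2r}\|$,
and taking the supremum over $\|x\|=1$ on the left finishes the proof.

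The one conceivable obstacle is the temptation to start from Lemma~\ref{lemma1} and keep the $|T|,|T^{*}|$ symmetry throughout. That route would give $|\langle Tx,x\rangle|^{2r}\le\langle|T|^{2r}x,x\rangle^{1/2}\langle|T^{*}|^{2r}x,x\rangle^{1/2}$ after one use of Lemma~\ref{lemma2}, but the geometric-mean bound $\sqrt{ab}\le\alpha a+(1-\alpha)b$ genuinely fails outside $\alpha=1/2$ (its sharp form is the unweighted AM--GM). Splitting into two one-sided Cauchy--Schwarz bounds and then invoking convexity cleanly sidesteps this issue and delivers the claim for the full range $\alpha\in[0,1]$.
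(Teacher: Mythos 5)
Your proof is correct, and it is genuinely different from (and more elementary than) the one in the paper. You establish the two extreme cases pointwise, namely $|\langle Tx,x\rangle|^{2r}\leq\langle|T|^{2r}x,x\rangle$ and $|\langle Tx,x\rangle|^{2r}\leq\langle|T^*|^{2r}x,x\rangle$, via Cauchy--Schwarz and Lemma~\ref{lemma2}, and then interpolate by the trivial observation that a number dominated by two quantities is dominated by any convex combination of them; this yields the full range $\alpha\in[0,1]$ in one stroke. The paper instead starts from the weighted split $|\langle Tx,x\rangle|\leq\alpha|\langle Tx,x\rangle|+(1-\alpha)\|T^*x\|$, applies convexity of $t^{2r}$, the mixed Schwarz inequality (Lemma~\ref{lemma1}), AM--GM, and Lemma~\ref{lemma2}, which only produces the bound with coefficients $\frac{\alpha}{2}$ and $1-\frac{\alpha}{2}$ (i.e.\ weight at most $\tfrac12$ on $|T|^{2r}$), and must then replace $T$ by $T^*$ and combine the two halves to cover all of $[0,1]$. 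What the paper's longer route buys is a reusable template: the same opening split is what allows the $\alpha|\langle Tx,x\rangle|$ term to be re-estimated by Buzano's inequality in Theorem~\ref{theorem2} and by Lemma~\ref{lemma1} in Theorem~\ref{theorem3}, producing the genuinely sharper bounds there; for Theorem~\ref{theorem1} in isolation your argument is cleaner. Your closing remark is also accurate: the symmetric route through Lemma~\ref{lemma1} stalls because $\sqrt{ab}\leq\alpha a+(1-\alpha)b$ fails for $\alpha\neq\tfrac12$, and both your convexity step and the paper's symmetrization are ways around exactly that obstruction.
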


\begin{proof}
Let $x\in \mathcal{H}$ with $\|x\|=1$. Then by Cauchy-Schwarz inequality, we get $$|\langle Tx,x\rangle|=\alpha |\langle Tx,x\rangle|+(1-\alpha)|\langle Tx,x\rangle|\leq \alpha |\langle Tx,x\rangle|+(1-\alpha)\|T^*x\|.$$
Therefore, by the convexity of the function $f(t)=t^{2r}$, we get
\begin{eqnarray*}
|\langle Tx,x\rangle|^{2r}&\leq& \alpha |\langle Tx,x\rangle|^{2r}+(1-\alpha)\|T^*x\|^{2r}\\
&\leq &  \alpha \langle |T|x,x\rangle^{r}~~\langle |T^*|x,x\rangle^{r}+(1-\alpha)\langle |T^*|^2x,x\rangle ^{r}, ~~\mbox{by Lemma \ref{lemma1}}\\
&\leq & \alpha \langle |T|^{r}x,x\rangle~~\langle |T^*|^{r}x,x\rangle+(1-\alpha)\langle |T^*|^{2r}x,x\rangle, ~~\mbox{by Lemma \ref{lemma2}}\\
&\leq & \frac{\alpha}{2} \left (\langle |T|^{r}x,x\rangle^2 + \langle |T^*|^{r}x,x\rangle^2 \right ) +(1-\alpha)\langle |T^*|^{2r}x,x\rangle,\\
&& \,\,\,\,\,\,\,\,\,\,\,\,\,\,\,\,\,\,\,\,\,\,\,\,\,\,\,\,\,\,\,\,\,\,\,\,\,\,\,\,\,\,\,\,\,\,\,\,\,\,\,\,\,\,\,\,\,\,\,\,\,\,\,\,\,\,\,\,\,\,\,\,\,\,\,\,\,\,\,\,\,\,\,\,\,\,\,\,\,\,\,\,\,\,\,\,~~\mbox{by AM-GM inequality}\\
&\leq & \frac{\alpha}{2} \left (\langle |T|^{2r}x,x \rangle + \langle |T^*|^{2r}x,x \rangle \right ) +(1-\alpha) \langle |T^*|^{2r}x,x \rangle, ~~\mbox{by Lemma \ref{lemma2}}\\
&=& \left \langle \left \{ \frac{\alpha}{2} \left ( |T|^{2r}+ |T^*|^{2r}\right )+(1-\alpha) |T^*|^{2r} \right\} x,x \right \rangle\\ 
&\leq & \left \| \frac{\alpha}{2} \left ( |T|^{2r}+ |T^*|^{2r}\right )+(1-\alpha) |T^*|^{2r} \right \|\\
&=& \left\|  \frac{\alpha}{2} |T|^{2r}+\left(1-\frac{\alpha}{2}\right)|T^*|^{2r}  \right\|. 
\end{eqnarray*}
Taking supremum over all $x\in \mathcal{H}$ with $\|x\|=1$, we get 
\[w^{2r}(T)\leq \left\| \frac{\alpha}{2} |T|^{2r}+(1-\frac{\alpha}{2}) |T^*|^{2r}  \right\|,~~\forall~~r\geq 1,~~\forall~\alpha,~0\leq \alpha\leq 1.\]
Replacing $T$ by $T^*$ in the above inequality we get,
\[w^{2r}(T)\leq \left\| \left(1-\frac{\alpha}{2}\right)  |T|^{2r}+\frac{\alpha}{2} |T^*|^{2r}  \right\|,~~\forall~~r\geq 1,~~\forall~\alpha,~0\leq \alpha\leq 1.\]
Combining the above two inequalities we get the desired inequality.
\end{proof}
As a consequence of  Theorem \ref{theorem1} we easily get the following corollary.

\begin{cor}\label{inequality1}
Let $T\in \mathcal{B}(\mathcal{H})$. Then 	
\begin{eqnarray}\label{bound1comb}
w^2(T)& \leq &  \min_{0\leq \alpha \leq 1} \left \| \alpha |T|^2 +(1-\alpha)|T^*|^2 \right \|.
\end{eqnarray} 
\end{cor}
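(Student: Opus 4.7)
The plan is to observe that Corollary \ref{inequality1} is an immediate specialization of Theorem \ref{theorem1}. Specifically, I would set $r=1$ in the conclusion of Theorem \ref{theorem1}, which yields
\[
w^{2}(T) \leq \bigl\| \alpha |T|^{2} + (1-\alpha)|T^*|^{2} \bigr\|
\]
for every $\alpha \in [0,1]$.

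Since the left-hand side $w^2(T)$ is independent of the parameter $\alpha$, the inequality continues to hold when the right-hand side is replaced by its infimum over $\alpha \in [0,1]$. Because $\alpha \mapsto \|\alpha |T|^2 + (1-\alpha)|T^*|^2\|$ is a continuous (in fact convex) function on the compact interval $[0,1]$, this infimum is attained and hence equals a minimum. This yields the stated bound
\[
w^{2}(T) \leq \min_{0 \leq \alpha \leq 1} \bigl\| \alpha |T|^{2} + (1-\alpha)|T^*|^{2} \bigr\|.
\]

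There is no real obstacle here; the whole content of the corollary is the specialization $r=1$ together with the (trivial) observation that one may minimize over the free parameter. All of the analytic work, namely the use of Lemma \ref{lemma1} (mixed Cauchy--Schwarz for $|\langle Tx,x\rangle|$), Lemma \ref{lemma2} (the Jensen/McCarty-type inequality for positive operators), the AM--GM inequality, and the symmetrization $T \leftrightarrow T^*$, has already been carried out in the proof of Theorem \ref{theorem1}.
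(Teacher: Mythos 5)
Your proposal is correct and is exactly the route the paper takes: the corollary is stated as an immediate consequence of Theorem \ref{theorem1} with $r=1$, followed by minimizing the $\alpha$-independent left-hand side over $\alpha\in[0,1]$. Your added remark that the infimum is attained (by continuity on a compact interval) is a harmless extra justification for writing ``$\min$'' rather than ``$\inf$.''
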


In \cite{kittaneh1}, Kittaneh proved that the following inequality
\begin{eqnarray}\label{kittaneh1}
w^{2}(T)\leq  \frac{1}{2}\left\| |T|^{2}+|T^*|^{2}  \right\|.
\end{eqnarray}

Inequalities obtained in Theorem \ref{theorem1} and Corollary \ref{inequality1} generalize and improve on the inequality (\ref{kittaneh1}) obtained by Kittaneh. In order to appreciate our inequality (\ref{bound1comb}), we give the following examples which show that $$\min_{0\leq \alpha \leq 1} \left\| \alpha |T|^{2}+\left(1-\alpha\right)|T^*|^{2}  \right\| < \frac{1}{2}\left\| |T|^{2}+|T^*|^{2}  \right\| $$ and imply that our inequality (\ref{bound1comb}) is a non-trivial improvement of the inequality (\ref{kittaneh1}).

\begin{example}\label{example1}
(i) Let  $$T =  \left(\begin{array}{ccc}
	0&1&0 \\
	0&0&2\\
	0&0&0
	\end{array}\right).$$ Then $$|T|^2 =  \left(\begin{array}{ccc}
	0&0&0 \\
	0&1&0\\
	0&0&4
	\end{array}\right)~~ \mbox{and}~~ |T^*|^2 =  \left(\begin{array}{ccc}
	1&0&0 \\
	0&4&0\\
	0&0&0
	\end{array}\right).$$ Therefore,
	\begin{eqnarray*}
	 \min_{0\leq \alpha \leq 1} \left\| \alpha |T|^{2}+\left(1-\alpha\right)|T^*|^{2}  \right\|=\min_{0\leq \alpha \leq 1}\max\{1-\alpha,4-3\alpha,4\alpha\} =\frac{16}{7}  	 
	\end{eqnarray*}
and   \[\frac{1}{2}\left\| |T|^{2}+|T^*|^{2}  \right\|	 =  \frac{5}{2}.\] 
Thus,  \[\min_{0\leq \alpha \leq 1} \left\| \alpha |T|^{2}+\left(1-\alpha\right)|T^*|^{2}  \right\| < \frac{1}{2}\left\| |T|^{2}+|T^*|^{2}  \right\|.\]
(ii) Let  $$S =  \left(\begin{array}{cccc}
	0&2&0&0 \\
	0&0&3&0\\
	0&0&0&0\\
	0&0&0&1
	\end{array}\right).$$ Then $$|S|^2 =  \left(\begin{array}{cccc}
	0&0&0&0 \\
	0&4&0&0\\
	0&0&9&0\\
	0&0&0&1
	\end{array}\right)~~ \mbox{and}~~ |S^*|^2 =  \left(\begin{array}{cccc}
	4&0&0&0 \\
	0&9&0&0\\
	0&0&0&0\\
	0&0&0&1
	\end{array}\right).$$ Therefore,
	\begin{eqnarray*}
	\min_{0\leq \alpha \leq 1} \left\| \alpha |S|^{2}+\left(1-\alpha\right)|S^*|^{2}  \right\|&=&\frac{81}{14}<\frac{13}{2}=\frac{1}{2}\left\| |S|^{2}+|S^*|^{2}  \right\|. 
\end{eqnarray*}

\end{example}

%mixed Schwarz
To prove our next theorem we need the well-known Heinz inequality.

\begin{theorem}[Heinz inequality \cite{kato}]\label{Heinz}
Let $T\in \mathcal{B}(\mathcal{H}).$  Then for all  $x,y\in \mathcal{H}$, 
\begin{eqnarray}
|\langle Tx,y\rangle|^2\leq \langle |T|^{2\lambda}x,x\rangle~~\langle |T^*|^{2(1-\lambda)}y,y\rangle, ~\forall~~ \lambda,~0\leq \lambda \leq 1.
\end{eqnarray}
\end{theorem}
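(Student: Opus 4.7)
The plan is to derive the Heinz inequality from the polar decomposition $T = U|T|$, where $U\in\mathcal{B}(\mathcal{H})$ is the partial isometry associated with $T$. The underlying idea is to split the single factor $|T|$ into the product $|T|^{\lambda}\cdot |T|^{1-\lambda}$, move one factor across the inner product using self-adjointness, and then apply the Cauchy--Schwarz inequality.

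Concretely, I would first write, for arbitrary $x,y\in\mathcal{H}$,
\[
\langle Tx,y\rangle \;=\; \langle U|T|^{1-\lambda}|T|^{\lambda}x,\, y\rangle \;=\; \langle |T|^{\lambda}x,\; |T|^{1-\lambda}U^{*}y\rangle,
\]
using that $|T|^{1-\lambda}$ is self-adjoint (by functional calculus applied to the positive operator $|T|$, which is legitimate since $0\leq 1-\lambda\leq 1$). Applying Cauchy--Schwarz then yields
\[
|\langle Tx,y\rangle|^{2} \;\leq\; \bigl\| |T|^{\lambda}x\bigr\|^{2}\,\bigl\| |T|^{1-\lambda}U^{*}y\bigr\|^{2} \;=\; \langle |T|^{2\lambda}x,x\rangle\,\langle U|T|^{2(1-\lambda)}U^{*}y,y\rangle.
\]

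To finish, I would invoke the identity $U|T|^{s}U^{*}=|T^{*}|^{s}$ for all $s\geq 0$. This is obtained by noting that from $T=U|T|$ one gets $TT^{*}=U|T|^{2}U^{*}$, so $|T^{*}|^{2}=U|T|^{2}U^{*}$, and then extending to arbitrary non-negative real powers via the continuous functional calculus (using that $U^{*}U$ acts as the identity on the range of $|T|$, which contains the range of every $|T|^{s}$ for $s>0$). Specialising to $s=2(1-\lambda)$ converts the second factor above into $\langle |T^{*}|^{2(1-\lambda)}y,y\rangle$, which is exactly the required bound.

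The main obstacle is the final step: the partial isometry $U$ is in general not unitary, so one must justify carefully that the functional calculus identity $U|T|^{s}U^{*}=|T^{*}|^{s}$ transfers powers through $U$. Once this identity is in hand, the proof is essentially a single application of Cauchy--Schwarz. The boundary cases $\lambda=0$ and $\lambda=1$ reduce to $|\langle Tx,y\rangle|^{2}\leq \|x\|^{2}\langle TT^{*}y,y\rangle$ and $|\langle Tx,y\rangle|^{2}\leq \langle T^{*}Tx,x\rangle\|y\|^{2}$ respectively, which should be verified separately (or handled by a limiting argument) to cover the case where the corresponding power of $|T|$ is only defined via convention on the kernel.
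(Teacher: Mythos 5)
Your argument is correct and is essentially the classical proof of the mixed Schwarz (Heinz--Kato) inequality; the paper itself states this result without proof, merely citing Kato, so there is nothing in the text to compare against. The decomposition $\langle Tx,y\rangle=\langle |T|^{\lambda}x,\,|T|^{1-\lambda}U^{*}y\rangle$ followed by Cauchy--Schwarz is exactly the standard route, and your identification of the one genuinely delicate point --- justifying $U|T|^{s}U^{*}=|T^{*}|^{s}$ for $s>0$ when $U$ is only a partial isometry --- is apt. That identity does go through: since $U^{*}U$ is the projection onto $\overline{\operatorname{ran}}|T|$, one gets $(U|T|^{2}U^{*})^{k}=U|T|^{2k}U^{*}$ for all $k$, and then approximates $t\mapsto t^{s/2}$ uniformly on $[0,\||T|^{2}\|]$ by polynomials vanishing at $0$ (legitimate because $t^{s/2}$ vanishes at $0$ for $s>0$, which is precisely why the endpoint $\lambda=1$ must be treated separately, as you do). The endpoint cases reduce to the ordinary Cauchy--Schwarz inequality as you indicate, so the proof is complete.
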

We note that Lemma \ref{lemma1} is a special case of the Heinz inequality.  Now proceeding in the same way as Theorem \ref{theorem1} we get the following theorem.
\begin{theorem}\label{heinz}
Let $T\in \mathcal{B}(\mathcal{H})$. Then
\begin{eqnarray}
w^{2r}(T)\leq \left \|\frac{\alpha}{2}\left(|T|^{4\lambda r}+|T^*|^{4(1-\lambda)r}  \right)+ (1-\alpha) |T^*|^{2r}  \right\|
\end{eqnarray}
and 
\begin{eqnarray}
w^{2r}(T)\leq \left \|\frac{\alpha}{2}\left(|T|^{4\lambda r}+|T^*|^{4(1-\lambda)r}  \right)+ (1-\alpha) |T|^{2r}  \right\|,
\end{eqnarray}
$ \forall ~~r\geq 1$ and $\forall ~\alpha,\lambda $ with $0\leq \alpha,\lambda \leq 1$.
\end{theorem}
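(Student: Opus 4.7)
The plan is to mirror the argument of Theorem \ref{theorem1}, simply replacing Lemma \ref{lemma1} with the Heinz inequality of Theorem \ref{Heinz}. So fix a unit vector $x \in \mathcal{H}$. I would begin with the same trivial splitting $|\langle Tx,x\rangle| = \alpha|\langle Tx,x\rangle| + (1-\alpha)|\langle Tx,x\rangle|$, bound the second copy by $\|T^*x\|$ via Cauchy--Schwarz, and then raise to the power $2r$ using convexity of $t \mapsto t^{2r}$ (valid because $r \geq 1$). This yields
\[
|\langle Tx,x\rangle|^{2r} \;\leq\; \alpha|\langle Tx,x\rangle|^{2r} + (1-\alpha)\|T^*x\|^{2r}.
\]

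For the first summand I would apply Heinz (Theorem \ref{Heinz}) with $y=x$ to get $|\langle Tx,x\rangle|^{2r} \leq \langle |T|^{2\lambda}x,x\rangle^{r}\langle |T^*|^{2(1-\lambda)}x,x\rangle^{r}$, then push the exponent $r$ inside each factor via Lemma \ref{lemma2}, then apply AM--GM to the product, and finally apply Lemma \ref{lemma2} once more to absorb the squaring. The net effect is
\[
\alpha|\langle Tx,x\rangle|^{2r} \;\leq\; \frac{\alpha}{2}\bigl(\langle |T|^{4\lambda r}x,x\rangle + \langle |T^*|^{4(1-\lambda)r}x,x\rangle\bigr),
\]
which is exactly where the exponents $4\lambda r$ and $4(1-\lambda)r$ appear. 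For the second summand, $\|T^*x\|^{2r} = \langle |T^*|^2 x,x\rangle^r \leq \langle |T^*|^{2r}x,x\rangle$ by a single application of Lemma \ref{lemma2}.

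Adding the two estimates and recognising the right-hand side as $\langle Ax,x\rangle$ for the positive operator $A = \frac{\alpha}{2}(|T|^{4\lambda r}+|T^*|^{4(1-\lambda)r}) + (1-\alpha)|T^*|^{2r}$, I would bound it by $\|A\|$ and then take the supremum over unit vectors $x$ to obtain the first stated inequality. For the second inequality I would apply the first to $T^*$ in place of $T$; since $w(T^*)=w(T)$, $|T^*|^{**}=|T^*|$ and $(T^*)^*=T$, this swaps the roles of $|T|$ and $|T^*|$, giving
\[
w^{2r}(T) \leq \Bigl\|\tfrac{\alpha}{2}\bigl(|T^*|^{4\lambda r}+|T|^{4(1-\lambda)r}\bigr)+(1-\alpha)|T|^{2r}\Bigr\|,
\]
after which substituting $\lambda \mapsto 1-\lambda$ (allowed since $\lambda$ ranges over $[0,1]$) produces exactly the stated form.

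The only step that needs care is the bookkeeping on the exponents: one has to use Lemma \ref{lemma2} twice, once to replace $\langle |T|^{2\lambda}x,x\rangle^r$ by $\langle |T|^{2\lambda r}x,x\rangle$ (valid because $r\geq 1$) and then again after AM--GM to replace $\langle |T|^{2\lambda r}x,x\rangle^{2}$ by $\langle |T|^{4\lambda r}x,x\rangle$. No new inequalities are required beyond those already invoked in Theorem \ref{theorem1}, so I do not anticipate any genuine obstacle.
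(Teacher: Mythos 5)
Your proposal is correct and is precisely what the paper intends: the paper gives no separate proof, stating only that one proceeds as in Theorem \ref{theorem1} with Lemma \ref{lemma1} replaced by the Heinz inequality, and your exponent bookkeeping (two applications of Lemma \ref{lemma2} producing $4\lambda r$ and $4(1-\lambda)r$) together with the $T\mapsto T^*$, $\lambda\mapsto 1-\lambda$ substitution for the second inequality is exactly right.
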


In \cite{abu}, Abu-Omar and Kittaneh proved that the following inequality
\begin{eqnarray}\label{kittaneh2}
w^{2}(T)\leq  \frac{1}{2} w(T^2) +\frac{1}{4}  \left\|   |T|^{2}+  |T^*|^{2}  \right\|.
\end{eqnarray}
In our next theorem we generalize and improve on the inequality   (\ref{kittaneh2}). To do so we need the following inequality.

\begin{lemma}(Buzano \cite{buzano})\label{lemma3}
Let $a,e,b\in \mathcal{H}$ with $\|e\|=1.$ Then 
\[|\langle a,e\rangle~~\langle e,b\rangle|\leq \frac{1}{2}\left(\|a\|~~\|b\|+|\langle a,b\rangle|\right).\]
\end{lemma}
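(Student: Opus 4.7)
The plan is to reduce Buzano's inequality to the Cauchy--Schwarz inequality by projecting $a$ and $b$ onto the orthogonal complement of $e$. Specifically, I would introduce the vectors
\[
a' = a - \langle a,e\rangle e, \qquad b' = b - \langle b,e\rangle e,
\]
both of which are orthogonal to $e$ (since $\|e\|=1$). A direct expansion of the inner product using the sesquilinearity of $\langle\cdot,\cdot\rangle$ gives
\[
\langle a',b'\rangle \;=\; \langle a,b\rangle - \langle a,e\rangle\langle e,b\rangle,
\]
so that on the one hand the triangle inequality yields
\[
|\langle a,e\rangle\langle e,b\rangle| \;\le\; |\langle a,b\rangle| + |\langle a',b'\rangle|,
\]
while on the other hand the Cauchy--Schwarz inequality gives $|\langle a',b'\rangle| \le \|a'\|\,\|b'\|$.

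Next I would compute, using the orthogonality $\langle a',e\rangle = \langle b',e\rangle = 0$, the Pythagorean identities $\|a'\|^2 = \|a\|^2 - |\langle a,e\rangle|^2$ and $\|b'\|^2 = \|b\|^2 - |\langle b,e\rangle|^2$. The remaining step is the algebraic bound
\[
\sqrt{\bigl(\|a\|^2 - |\langle a,e\rangle|^2\bigr)\bigl(\|b\|^2 - |\langle b,e\rangle|^2\bigr)} \;\le\; \|a\|\,\|b\| - |\langle a,e\rangle|\,|\langle e,b\rangle|,
\]
which reduces, after squaring, to the identity
\[
\bigl(\|a\|\,\|b\| - |\langle a,e\rangle|\,|\langle e,b\rangle|\bigr)^2 - \bigl(\|a\|^2 - |\langle a,e\rangle|^2\bigr)\bigl(\|b\|^2 - |\langle b,e\rangle|^2\bigr) \;=\; \bigl(\|a\|\,|\langle e,b\rangle| - |\langle a,e\rangle|\,\|b\|\bigr)^2 \;\ge\; 0.
\]
Since $|\langle a,e\rangle|\,|\langle e,b\rangle| = |\langle a,e\rangle\langle e,b\rangle|$, combining the two displays produces
\[
|\langle a,e\rangle\langle e,b\rangle| \;\le\; |\langle a,b\rangle| + \|a\|\,\|b\| - |\langle a,e\rangle\langle e,b\rangle|,
\]
and transposing the last term and dividing by $2$ gives exactly the claimed inequality.

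The only genuinely non-routine step is recognizing the sharp algebraic inequality linking $\sqrt{(A^2-s^2)(B^2-t^2)}$ with $AB - st$; the naive AM--GM replacement would be too weak and would not yield the factor $\tfrac{1}{2}$. Everything else is just Cauchy--Schwarz applied to the components of $a,b$ orthogonal to $e$, together with the triangle inequality. Boundary cases such as $\langle a,e\rangle = 0$ or $\langle b,e\rangle = 0$ are handled automatically since the inequality becomes the usual Cauchy--Schwarz bound $|\langle a,b\rangle| \le \|a\|\|b\|$.
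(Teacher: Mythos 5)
Your argument is correct and complete. Note that the paper does not prove this lemma at all --- it is quoted directly from Buzano's paper with a citation --- so there is no internal proof to compare against; you have supplied a valid self-contained derivation. Each step checks out: the expansion $\langle a',b'\rangle=\langle a,b\rangle-\langle a,e\rangle\langle e,b\rangle$ is right (the cross terms cancel because $\|e\|=1$), the Pythagorean identities follow from $a'\perp e$ and $b'\perp e$, and the key algebraic inequality $\sqrt{(A^2-s^2)(B^2-t^2)}\le AB-st$ (with $s\le A$, $t\le B$ guaranteeing the right-hand side is nonnegative before squaring) reduces to $(At-sB)^2\ge 0$ exactly as you state. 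You are also right that this sharpening, rather than a crude AM--GM bound on $\|a'\|\,\|b'\|$, is what produces the factor $\tfrac12$; the projection-onto-$e^{\perp}$ route is one of the standard proofs of Buzano's inequality and is a reasonable choice here.
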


Using  Buzano's inequality we first prove the following lemma.

\begin{lemma}
	Let $T\in \mathcal{B}(\mathcal{H})$ and let $x\in \mathcal{H}$ with $\|x\|=1$. Then 
	\begin{eqnarray}
	|\langle Tx,x\rangle|^{2r} \leq \frac{1}{2} |\langle T^2x,x\rangle|^{r}+\frac{1}{4} \left \langle (|T|^{2r}+|T^*|^{2r})x,x \right \rangle,~~\forall~~ r\geq 1.
	\end{eqnarray}
\end{lemma}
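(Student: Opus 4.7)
The plan is to combine Buzano's inequality (Lemma \ref{lemma3}) with a convexity step to handle the $r$-th power, and then apply Lemma \ref{lemma2} together with AM-GM to extract the $|T|^{2r}$ and $|T^*|^{2r}$ terms.

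First, I would apply Buzano's inequality with $a = Tx$, $b = T^*x$, and $e = x$. Since $\langle x, T^*x\rangle = \overline{\langle T^*x, x\rangle} = \langle Tx, x\rangle$, the left-hand side becomes $|\langle Tx,x\rangle \langle x,T^*x\rangle| = |\langle Tx,x\rangle|^2$, while $\langle Tx, T^*x\rangle = \langle T^2 x, x\rangle$. This yields the base estimate
\[
|\langle Tx, x\rangle|^2 \leq \tfrac{1}{2}\bigl(\|Tx\|\,\|T^*x\| + |\langle T^2 x, x\rangle|\bigr).
\]

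Next I raise both sides to the $r$-th power. Using the convexity inequality $(s+t)^r \le 2^{r-1}(s^r + t^r)$ for $s,t \ge 0$ and $r \ge 1$, the factor $1/2^r$ from Buzano combines with $2^{r-1}$ to give exactly $1/2$, producing
\[
|\langle Tx, x\rangle|^{2r} \leq \tfrac{1}{2}\|Tx\|^r\|T^*x\|^r + \tfrac{1}{2}|\langle T^2 x, x\rangle|^r.
\]
This is the key numerical step where the constants must align; getting $\tfrac12$ rather than $\tfrac{1}{2^r}$ in the final bound is the main thing to verify, and it works precisely because the convexity bound is sharp up to this factor.

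Then I handle the mixed term $\|Tx\|^r\|T^*x\|^r$ by AM-GM, bounding it by $\tfrac{1}{2}(\|Tx\|^{2r} + \|T^*x\|^{2r})$. Applying Lemma \ref{lemma2} to the positive operators $|T|^2$ and $|T^*|^2$ gives $\|Tx\|^{2r} = \langle |T|^2 x, x\rangle^r \le \langle |T|^{2r} x, x\rangle$ and similarly for $T^*$. Assembling these ingredients yields
\[
|\langle Tx, x\rangle|^{2r} \leq \tfrac{1}{4}\bigl\langle (|T|^{2r} + |T^*|^{2r})x, x\bigr\rangle + \tfrac{1}{2}|\langle T^2 x, x\rangle|^r,
\]
which is the desired inequality.

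The main obstacle, as noted, is the bookkeeping of constants when passing from the quadratic Buzano estimate to the $r$-th power; the rest of the argument is standard once that step is handled, since Lemma \ref{lemma2} and AM-GM combine cleanly to produce the $\tfrac{1}{4}$ coefficient.
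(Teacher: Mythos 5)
Your proof is correct and follows essentially the same route as the paper: Buzano's inequality with $a=Tx$, $b=T^*x$, $e=x$, then convexity of $t^r$ to pass to the $r$-th power (your $(s+t)^r\le 2^{r-1}(s^r+t^r)$ is exactly this), then AM-GM and Lemma \ref{lemma2} to produce the $\tfrac14$ term. No gaps.
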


\begin{proof}
	Taking $a=Tx$, $b=T^*x$ and $e=x$ in Lemma \ref{lemma3}, we get
	\[|\langle Tx,x\rangle|^2\leq \frac{1}{2}\left(|\langle T^2x,x\rangle|+\|Tx\|~~\|T^*x\|\right).\]
	By convexity of the function $f(t)=t^r~(r \geq 1)$, we get
	\begin{eqnarray*}
		|\langle Tx,x\rangle|^{2r} &\leq & \frac{1}{2}\left(|\langle T^2x,x\rangle|^r+\|Tx\|^r~~\|T^*x\|^r\right)\\
		&\leq & \frac{1}{2}\left(|\langle T^2x,x\rangle|^r+\frac{1}{2}(\|Tx\|^{2r}+\|T^*x\|^{2r})\right),~~\mbox{by AM-GM inequality}\\
		&= & \frac{1}{2}\left(|\langle T^2x,x\rangle|^r+\frac{1}{2}(\langle |T|^2x,x\rangle ^{r}+\langle |T^*|^2x,x\rangle ^{r})\right)\\
		&\leq & \frac{1}{2}\left(|\langle T^2x,x\rangle|^r+\frac{1}{2}(\langle |T| ^{2r}x,x\rangle+\langle |T^*|^{2r}x,x\rangle)\right), ~~\mbox{by Lemma \ref{lemma2}}\\
		&=& \frac{1}{2} |\langle T^2x,x\rangle|^{r}+\frac{1}{4} \left \langle (|T|^{2r}+|T^*|^{2r})x,x \right \rangle.
	\end{eqnarray*}
	This completes the proof.
\end{proof}
Now, we present the desired theorem.

\begin{theorem}\label{theorem2}
Let $T\in \mathcal{B}(\mathcal{H})$. Then  $ \forall~~r\geq 1 $ and $ \forall~\alpha,~0\leq \alpha\leq 1,$ 
\begin{eqnarray}\label{inequality2}
(i) ~w^{2r}(T) &\leq & \frac{\alpha}{2} w^r(T^2) +\left\| \frac{\alpha}{4}   |T|^{2r}+\left(1-\frac{3}{4}\alpha\right)  |T^*|^{2r}  \right\|,
\end{eqnarray}
\begin{eqnarray}\label{inequality3}
(ii)~w^{2r}(T) &\leq & \frac{\alpha}{2} w^r(T^2) +\left\| (1-\frac{3}{4} \alpha)  |T|^{2r}+\frac{\alpha}{4} |T^*|^{2r}  \right\|.
\end{eqnarray}
\end{theorem}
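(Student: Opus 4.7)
The plan is to imitate the scheme of Theorem \ref{theorem1}: for a unit vector $x\in\mathcal{H}$ and any $\alpha\in[0,1]$, split
\[|\langle Tx,x\rangle|^{2r}=\alpha|\langle Tx,x\rangle|^{2r}+(1-\alpha)|\langle Tx,x\rangle|^{2r},\]
bound the two summands by different tools, add them, and then take the supremum over $\|x\|=1$.

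For the first summand I would apply the lemma immediately preceding the theorem, giving
\[\alpha|\langle Tx,x\rangle|^{2r}\leq \frac{\alpha}{2}|\langle T^{2}x,x\rangle|^{r}+\frac{\alpha}{4}\langle(|T|^{2r}+|T^{*}|^{2r})x,x\rangle.\]
For the second summand I would use Cauchy--Schwarz in the form $|\langle Tx,x\rangle|=|\langle x,T^{*}x\rangle|\leq \|T^{*}x\|$, raise to the $2r$-th power to obtain $|\langle Tx,x\rangle|^{2r}\leq \langle|T^{*}|^{2}x,x\rangle^{r}$, and then invoke Lemma \ref{lemma2} on the positive operator $|T^{*}|^{2}$ with exponent $r\geq 1$ to upgrade this to $\langle|T^{*}|^{2}x,x\rangle^{r}\leq \langle|T^{*}|^{2r}x,x\rangle$. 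Adding the two estimates and collecting the coefficient of $|T^{*}|^{2r}$, namely $\tfrac{\alpha}{4}+(1-\alpha)=1-\tfrac{3\alpha}{4}$, yields
\[|\langle Tx,x\rangle|^{2r}\leq \frac{\alpha}{2}|\langle T^{2}x,x\rangle|^{r}+\left\langle\left(\frac{\alpha}{4}|T|^{2r}+\left(1-\frac{3\alpha}{4}\right)|T^{*}|^{2r}\right)x,x\right\rangle.\]

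Taking the supremum over unit vectors $x$ now delivers (i): the left-hand side becomes $w^{2r}(T)$; the first term on the right is bounded by $\tfrac{\alpha}{2}w^{r}(T^{2})$ since $t\mapsto t^{r}$ is monotone on $[0,\infty)$; and the second term is bounded by the operator norm of the positive combination appearing inside the inner product. Inequality (ii) then follows by applying (i) with $T^{*}$ in place of $T$ and invoking the identities $w(T^{*})=w(T)$ and $w((T^{*})^{2})=w(T^{2})$, which simply interchanges the roles of $|T|$ and $|T^{*}|$ in the operator-norm term.

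I do not expect any genuine obstacle here. The essential analytic content is concentrated in the preceding lemma, and the theorem is mostly a packaging step: the convex-combination trick from Theorem \ref{theorem1}, plus careful bookkeeping of the coefficient $1-\tfrac{3\alpha}{4}$. The only minor point worth flagging is the use of subadditivity of the supremum (so that the two summands can be bounded separately by $\tfrac{\alpha}{2}w^{r}(T^{2})$ and the operator-norm term) and the fact that the pointwise inequality $\langle Ax,x\rangle\leq \|A\|$ for positive $A$ is what converts the second summand into an operator-norm bound.
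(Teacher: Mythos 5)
Your proposal is correct and follows essentially the same route as the paper: the convex split of $|\langle Tx,x\rangle|^{2r}$, the Buzano-based lemma preceding the theorem for the $\alpha$-part, Cauchy--Schwarz plus Lemma \ref{lemma2} for the $(1-\alpha)$-part, and the passage to $T^*$ for (ii). The only cosmetic difference is that the paper routes the split through convexity of $t^{2r}$ applied to $\alpha|\langle Tx,x\rangle|+(1-\alpha)\|T^*x\|$, whereas you split the $2r$-th power directly, which is equivalent.
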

\begin{proof}
 Let $x\in \mathcal{H}$ with $\|x\|=1$. Then by Cauchy-Schwarz inequality, we get $$|\langle Tx,x\rangle|=\alpha |\langle Tx,x\rangle|+(1-\alpha)|\langle Tx,x\rangle|\leq \alpha |\langle Tx,x\rangle|+(1-\alpha)\|T^*x\|, ~\forall~~ \alpha, 0 \leq \alpha \leq1.$$
By convexity of the function $f(t)=t^{2r}~(r \geq1)$, we get
\begin{eqnarray*}
|\langle Tx,x\rangle|^{2r}&\leq& \alpha |\langle Tx,x\rangle|^{2r}+(1-\alpha)\|T^*x\|^{2r}\\ 
&\leq& \alpha |\langle Tx,x\rangle|^{2r}+(1-\alpha)\langle |T^*|^{2r}x,x\rangle, ~~\mbox{by Lemma \ref{lemma2}}\\
&\leq& \frac{\alpha }{2} |\langle T^2x,x\rangle|^{r}+\frac{\alpha }{4} \left \langle (|T|^{2r}+|T^*|^{2r})x,x \right \rangle+(1-\alpha)\langle |T^*|^{2r}x,x\rangle, \\
&&\,\,\,\,\,\,\,\,\,\,\,\,\,\,\,\,\,\,\,\,\,\,\,\,\,\,\,\,\,\,\,\,\,\,\,\,\,\,\,\,\,\,\,\,\,\,\,\,\,\,\,\,\,\,\,\,\,\,\,\,\,\,\,\,\,\,\,\,\,\,\,\,\,\,\,\,\,\,\,\,\,\,\,\,\,\,\,\,\,\,\,\,\,\,\,\,~~\mbox{by Lemma \ref{lemma3}}\\
&=& \frac{\alpha }{2} |\langle T^2x,x\rangle|^{r}+ \left \langle \left \{ \frac{\alpha}{4} \left ( |T|^{2r}+ |T^*|^{2r}\right )+(1-\alpha) |T^*|^{2r} \right\} x,x \right \rangle\\
&=& \frac{\alpha }{2} |\langle T^2x,x\rangle|^{r}+\left \langle \left \{  \frac{\alpha}{4}   |T|^{2r}+\left(1-\frac{3}{4}\alpha\right)  |T^*|^{2r}     \right\} x,x \right \rangle\\
&\leq& \frac{\alpha}{2} w^r(T^2) +\left\| \frac{\alpha}{4}   |T|^{2r}+\left(1-\frac{3}{4}\alpha\right)  |T^*|^{2r}  \right\|.
\end{eqnarray*}
Taking supremum over all $x\in \mathcal{H}$ with $\|x\|=1$, we get the inequality (\ref{inequality2}). 
Replacing $T$ by $T^*$ in the inequality (\ref{inequality2}) we get the inequality (\ref{inequality3}). This completes the proof.
\end{proof}
As a consequence we get the following upper bound for the numerical radius.
\begin{cor} \label{cor2}
	Let $T \in \mathcal{B}(\mathcal{H}).$ Then 
\begin{eqnarray}\label{bound2comb}
w^{2}(T)\leq \min \left \{\beta_1, \beta_2  \right \}, 
\end{eqnarray}
\mbox{where}
\[ \beta_1 = \min_{0\leq \alpha \leq 1} \left \{\frac{\alpha}{2} w(T^2) +\left\| \frac{\alpha}{4}   |T|^{2}+\left(1-\frac{3}{4}\alpha\right)  |T^*|^{2}  \right\|\right \}\] and
 \[\beta_2 = \min_{0\leq \alpha \leq 1} \left \{\frac{\alpha}{2} w(T^2) +\left\|  \left(1-\frac{3}{4}\alpha\right)   |T|^{2}+\frac{\alpha}{4}  |T^*|^{2}  \right\|\right \}.\]
\end{cor}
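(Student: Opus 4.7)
The plan is to derive Corollary \ref{cor2} as an immediate specialization of Theorem \ref{theorem2} at the parameter value $r=1$. Since Theorem \ref{theorem2} produces two parallel families of inequalities (labelled (i) and (ii)) indexed by $\alpha\in[0,1]$, and since the left-hand side $w^2(T)$ is independent of $\alpha$, each family may be tightened by taking the infimum of the right-hand side over $\alpha\in[0,1]$.

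Concretely, I would first set $r=1$ in inequality (\ref{inequality2}) of Theorem \ref{theorem2} to obtain
\[
w^{2}(T) \leq \frac{\alpha}{2}\, w(T^2) + \left\| \frac{\alpha}{4}|T|^{2} + \left(1-\frac{3}{4}\alpha\right)|T^{*}|^{2} \right\|
\]
for every $\alpha \in [0,1]$. Since the left-hand side does not depend on $\alpha$, I may take the infimum over $\alpha \in [0,1]$ on the right, yielding $w^{2}(T) \leq \beta_{1}$. The infimum is attained because the map $\alpha \mapsto \frac{\alpha}{2}w(T^2) + \|\frac{\alpha}{4}|T|^{2} + (1-\frac{3}{4}\alpha)|T^{*}|^{2}\|$ is a continuous (in fact convex) function on the compact interval $[0,1]$, so ``$\min$'' is justified in the statement of the corollary.

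Next, repeating the same argument starting from inequality (\ref{inequality3}) of Theorem \ref{theorem2} with $r=1$ gives $w^{2}(T) \leq \beta_{2}$. Combining the two bounds, $w^{2}(T) \leq \min\{\beta_{1},\beta_{2}\}$, which is precisely (\ref{bound2comb}).

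There is essentially no obstacle here, as the argument is a routine extraction from the previous theorem; the only point worth flagging is the justification for writing ``$\min$'' in place of ``$\inf$'', which follows from the continuity of the two objective functions on the compact set $[0,1]$ (using that $\alpha \mapsto \|\alpha A + (1-\alpha) B\|$ is continuous for fixed selfadjoint $A,B$, and that $\frac{\alpha}{2}w(T^2)$ is linear in $\alpha$).
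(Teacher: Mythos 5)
Your proposal is correct and follows exactly the paper's route: the corollary is obtained by setting $r=1$ in inequalities (\ref{inequality2}) and (\ref{inequality3}) of Theorem \ref{theorem2} and minimizing over $\alpha\in[0,1]$. Your additional remark justifying the replacement of ``$\inf$'' by ``$\min$'' via continuity on the compact interval is a harmless refinement the paper leaves implicit.
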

\begin{proof}
The proof follows easily by taking $r=1$ in the inequalities (\ref{inequality2}) and (\ref{inequality3}).
\end{proof}

Inequalities obtained in Theorem \ref{theorem2} and Corollary \ref{cor2} generalize and improve on the inequality (\ref{kittaneh2}) obtained by Abu-Omar and Kittaneh. In order to appreciate our inequality (\ref{bound2comb}), we give the following examples, it shows that  the inequality (\ref{bound2comb}) is a non-trivial improvement of the inequality (\ref{kittaneh2}).

\begin{example}
(i) Let  $$T =  \left(\begin{array}{ccc}
	0&1&0 \\
	0&0&2\\
	0&0&0
	\end{array}\right).$$ Then by elementary calculations, we get $\beta_1=\frac{7}{4}$ and $\beta_2=\frac{22}{13}$. Therefore, 
	\[\min \left \{\beta_1, \beta_2  \right \}=\frac{22}{13}<\frac{7}{4}=\frac{1}{2} w(T^2) +\frac{1}{4}  \left\|   |T|^{2}+  |T^*|^{2}  \right\|.\]
(ii) Let  $$S =  \left(\begin{array}{cccc}
	0&2&0&0 \\
	0&0&3&0\\
	0&0&0&0\\
	0&0&0&1
	\end{array}\right).$$ Then by elementary calculations, we get $\beta_1=\frac{19}{4}$ and $\beta_2=\frac{37}{8}$. Therefore, 
	\[\min \left \{\beta_1, \beta_2  \right \}=\frac{37}{8}<\frac{19}{4}=\frac{1}{2} w(S^2) +\frac{1}{4}  \left\|   |S|^{2}+  |S^*|^{2}  \right\|.\]

\end{example}
We next prove the following theorem.
\begin{theorem}\label{theorem3}
Let $T\in \mathcal{B}(\mathcal{H})$. Then  $ \forall~~r\geq 1 $ and $ \forall~\alpha,~0\leq \alpha\leq 1 $ we have
\begin{eqnarray}\label{inequality2.11}
w^{2r}(T) & \leq & \left\| \alpha   \left(\frac{|T|+|T^*|}{2}\right)^{2r}+\left(1-\alpha\right)  |T^*|^{2r}  \right\|.
\end{eqnarray}
\begin{eqnarray}\label{inequality2.12}
w^{2r}(T) & \leq & \left\| \alpha   \left(\frac{|T|+|T^*|}{2}\right)^{2r}+\left(1-\alpha\right)  |T|^{2r}  \right\|.
\end{eqnarray}
\end{theorem}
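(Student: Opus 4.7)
The plan is to mimic the structure of the proof of Theorem \ref{theorem1}, replacing the splitting that led to a product $\langle |T|x,x\rangle^{r}\langle |T^*|x,x\rangle^{r}$ by an AM-GM step performed at the very beginning on $\langle|T|x,x\rangle^{1/2}\langle|T^*|x,x\rangle^{1/2}$, so that the mixed term collapses into a single operator $\bigl(\tfrac{|T|+|T^*|}{2}\bigr)^{2r}$.

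First I would fix a unit vector $x\in\mathcal H$ and write, by Cauchy--Schwarz,
\[
|\langle Tx,x\rangle|=\alpha|\langle Tx,x\rangle|+(1-\alpha)|\langle Tx,x\rangle|\le \alpha|\langle Tx,x\rangle|+(1-\alpha)\|T^*x\|.
\]
Applying convexity of $t\mapsto t^{2r}$ for $r\ge 1$ gives
\[
|\langle Tx,x\rangle|^{2r}\le \alpha|\langle Tx,x\rangle|^{2r}+(1-\alpha)\|T^*x\|^{2r}.
\]
For the second term, $\|T^*x\|^{2r}=\langle|T^*|^{2}x,x\rangle^{r}\le\langle|T^*|^{2r}x,x\rangle$ by Lemma \ref{lemma2}. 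For the first term, I would apply Lemma \ref{lemma1} followed by the elementary AM--GM inequality $ab\le\bigl(\tfrac{a+b}{2}\bigr)^{2}$ to obtain
\[
|\langle Tx,x\rangle|\le\langle|T|x,x\rangle^{1/2}\langle|T^*|x,x\rangle^{1/2}\le\left\langle\frac{|T|+|T^*|}{2}x,x\right\rangle,
\]
and then invoke Lemma \ref{lemma2} on the positive operator $\tfrac{|T|+|T^*|}{2}$ with exponent $2r\ge 1$ to conclude
\[
|\langle Tx,x\rangle|^{2r}\le\left\langle\left(\frac{|T|+|T^*|}{2}\right)^{2r}x,x\right\rangle.
\]

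Combining these two estimates in the convex decomposition produces
\[
|\langle Tx,x\rangle|^{2r}\le\left\langle\left\{\alpha\Bigl(\tfrac{|T|+|T^*|}{2}\Bigr)^{2r}+(1-\alpha)|T^*|^{2r}\right\}x,x\right\rangle,
\]
and bounding the right-hand side by the operator norm and taking the supremum over unit $x$ yields (\ref{inequality2.11}). For (\ref{inequality2.12}), I would simply replace $T$ by $T^*$, using that $w(T^*)=w(T)$ and that conjugation swaps $|T|$ with $|T^*|$ while leaving $\tfrac{|T|+|T^*|}{2}$ invariant.

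There is no substantial obstacle here: every step is a direct reuse of the three lemmas already stated (Lemma \ref{lemma1}, Lemma \ref{lemma2}, and the convexity/AM--GM inequalities) in the same pattern as Theorem \ref{theorem1}. The only delicate point to verify cleanly is the application of Lemma \ref{lemma2} to $\tfrac{|T|+|T^*|}{2}$, which requires noting that this operator is positive (being the average of two positive operators) so that the power inequality $\langle Ax,x\rangle^{2r}\le\langle A^{2r}x,x\rangle$ is legitimate for $2r\ge 1$.
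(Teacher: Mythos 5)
Your proposal is correct and follows essentially the same route as the paper's proof: the same Cauchy--Schwarz splitting and convexity step, Lemma \ref{lemma2} on the $\|T^*x\|^{2r}$ term, Lemma \ref{lemma1} followed by AM--GM to replace the geometric mean $\langle |T|x,x\rangle^{1/2}\langle |T^*|x,x\rangle^{1/2}$ by $\bigl\langle \tfrac{|T|+|T^*|}{2}x,x\bigr\rangle$, and then Lemma \ref{lemma2} applied to the positive operator $\tfrac{|T|+|T^*|}{2}$, ending with the substitution $T\mapsto T^*$ for the second inequality. The only cosmetic difference is that you apply Lemma \ref{lemma1} and AM--GM before raising to the power $2r$, whereas the paper raises to the power first and manipulates inside it; the computation is the same.
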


\begin{proof}
Let $x\in \mathcal{H}$ with $\|x\|=1$. Then by Cauchy-Schwarz inequality, we get $ \forall~~ \alpha,~~ 0 \leq \alpha \leq 1,$
$$|\langle Tx,x\rangle|=\alpha |\langle Tx,x\rangle|+(1-\alpha)|\langle Tx,x\rangle|\leq \alpha |\langle Tx,x\rangle|+(1-\alpha)\|T^*x\|.$$
By the convexity of the function $f(t)=t^{2r}~(r \geq 1) $, we get
\begin{eqnarray*}
|\langle Tx,x\rangle|^{2r}&\leq& \alpha |\langle Tx,x\rangle|^{2r}+(1-\alpha)\|T^*x\|^{2r}\\ 
&\leq& \alpha |\langle Tx,x\rangle|^{2r}+(1-\alpha)\langle |T^*|^{2r}x,x\rangle, ~~\mbox{by Lemma \ref{lemma2}}\\
&\leq& \alpha \left(\langle |T|x,x\rangle^{1/2}\langle |T^*|x,x\rangle^{1/2}\right)^{2r}+(1-\alpha)\langle |T^*|^{2r}x,x\rangle, ~~\mbox{by Lemma \ref{lemma1}}\\
&\leq& \alpha \left(\frac{\langle |T|x,x\rangle+\langle |T^*|x,x\rangle}{2} \right)^{2r}+(1-\alpha)\langle |T^*|^{2r}x,x\rangle, \\
&& \,\,\,\,\,\,\,\,\,\,\,\,\,\,\,\,\,\,\,\,\,\,\,\,\,\,\,\,\,\,\,\,\,\,\,\,\,\,\,\,\,\,\,\,\,\,\,\,\,\,\,\,\,\,\,\,\,\,\,\,\,\,\,\,\,\,\,\,\,\,\,\,\,\,\,\,\,\,\,\,\,\,\,\,\,\,\,\,\,\,\,\,\,\,\,\,~~\mbox{by AM-GM inequality}\\
&=& \alpha \left(\frac{\langle (|T|+ |T^*|)x,x\rangle}{2} \right)^{2r}+(1-\alpha)\langle |T^*|^{2r}x,x\rangle \\
&\leq& \alpha \left \langle \left(\frac{|T|+ |T^*|}{2}\right)^{2r}x,x \right \rangle +(1-\alpha)\langle |T^*|^{2r}x,x\rangle, ~~\mbox{by Lemma \ref{lemma2}}\\
&=& \left \langle \left\{   \alpha   \left(\frac{|T|+|T^*|}{2}\right)^{2r}+\left(1-\alpha\right)  |T^*|^{2r}   \right\} \right \rangle\\
& \leq & \left\| \alpha   \left(\frac{|T|+|T^*|}{2}\right)^{2r}+\left(1-\alpha\right)  |T^*|^{2r}  \right\|.
\end{eqnarray*}
Taking supremum over all $x\in \mathcal{H}$ with $\|x\|=1$, we get the inequality (\ref{inequality2.11}).
Replacing $T$ by $T^*$ in the inequality (\ref{inequality2.11}), we get that the inequality (\ref{inequality2.12}).
\end{proof}
The following corollary is an easy conequence of Theorem \ref{theorem3}.
\begin{cor}\label{cor3}
Let $T\in \mathcal{B}(\mathcal{H})$. Then  
\begin{eqnarray}\label{bound3comb}
w^{2}(T)&\leq &\min \{ \gamma_1, \gamma_2\},
\end{eqnarray}
 \mbox{where }
$$ \gamma_1 = \min_{0\leq \alpha \leq 1} \left\| \alpha   \left(\frac{|T|+|T^*|}{2}\right)^{2}+\left(1-\alpha\right)  |T^*|^{2}  \right\| $$ 
and
$$ \gamma_2 = \min_{0\leq \alpha \leq 1} \left\| \alpha   \left(\frac{|T|+|T^*|}{2}\right)^{2}+\left(1-\alpha\right)  |T|^{2}  \right\|.$$
\end{cor}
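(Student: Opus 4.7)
My plan is to obtain Corollary \ref{cor3} as an immediate specialization of Theorem \ref{theorem3} at the exponent $r=1$. Theorem \ref{theorem3} delivers two families of upper bounds for $w^{2r}(T)$ parametrized by $\alpha \in [0,1]$: inequality (\ref{inequality2.11}) involves the term $(1-\alpha)|T^*|^{2r}$, while inequality (\ref{inequality2.12}) involves $(1-\alpha)|T|^{2r}$. Setting $r=1$ in the first inequality turns the left-hand side into $w^{2}(T)$, which is independent of the parameter $\alpha$, so I may take the infimum of the right-hand side over $\alpha \in [0,1]$ to conclude $w^{2}(T) \leq \gamma_{1}$.

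Applying the same reasoning to inequality (\ref{inequality2.12}) with $r=1$ gives $w^{2}(T) \leq \gamma_{2}$. Combining the two estimates yields $w^{2}(T) \leq \min\{\gamma_{1},\gamma_{2}\}$, which is the claim. There is essentially no obstacle here, since the corollary is a direct bookkeeping consequence of the theorem; the only point worth noting is that for fixed self-adjoint operators $A,B \in \mathcal{B}(\mathcal{H})$, the map $\alpha \mapsto \|\alpha A + (1-\alpha) B\|$ is continuous on the compact interval $[0,1]$, so the infimum is attained and the use of $\min$ rather than $\inf$ in the definitions of $\gamma_{1}$ and $\gamma_{2}$ is legitimate.
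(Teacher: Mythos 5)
Your proof is correct and matches the paper's intent exactly: the paper presents this corollary as an immediate consequence of Theorem \ref{theorem3} obtained by setting $r=1$ in inequalities (\ref{inequality2.11}) and (\ref{inequality2.12}) and minimizing over $\alpha$. Your added remark on the continuity of $\alpha \mapsto \|\alpha A + (1-\alpha)B\|$ justifying the attainment of the minimum is a small but welcome extra precision.
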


\begin{remark}
In \cite{kittaneh2}, Kittaneh proved that the following inequality
\begin{eqnarray}\label{kittaneh3}
w^{2}(T)\leq \frac{1}{4}\left\| ~~|T|+|T^*|  ~~\right\|^2.
\end{eqnarray}
The inequalities obtained in Theorem \ref{theorem3} and Corollary \ref{cor3} generalize and improve on the inequality (\ref{kittaneh3}) obtained by Kittaneh. As before considering the operators $T$ and $S$ used in Example \ref{example1} we can show  that the inequality (\ref{bound3comb}) is a non-trivial improvement of the inequality (\ref{kittaneh3}).
\end{remark}

\section{\textbf{ Application to estimate the modulus of zeros of polynomials }}

 \noindent Let $p(z)=z^n+a_{n-1}z^{n-1}+a_{n-2}z^{n-2}+\ldots+a_1z+a_0$ be a monic polynomial of degree $n(\geq 2)$ with complex coefficients $a_i$, $i=0,1,\ldots,n-1$. Then the Frobenius companion matrix of $p(z)$ is 
$$C(p)=\left(\begin{array}{ccccc}
-a_{n-1}&-a_{n-2}&\ldots&-a_1&-a_0\\
1&0&\ldots&0&0\\
0&1&\ldots&0&0\\
\vdots&\vdots& &\vdots&\vdots\\
0&0&\ldots&1&0\\
\end{array}\right)_{n\times n}.$$
It is well-known that all the eigenvalues of $C(p)$ are exactly the zeros of the polynomial $p(z).$ We consider $C(p)$ as a bounded linear operator on $\mathbb{C}^n$. Therefore, it follows from the inequality $r(C(p))\leq w(C(p))$ that if $\lambda$ is a zero of $p(z)$ then $|\lambda|\leq w(C(p)).$ Using this argument, we obtain estimation for the bounds of the zeros of the polynomial $p(z)$. In order to achive our goal, we first prove the following inequality for the numerical radius of $2\times 2$ block matrices.

\begin{theorem}\label{theorem4}
Let $\mathcal{H}_1$, $\mathcal{H}_2$ be two complex Hilbert spaces and let $\mathbb{T}=\left(\begin{array}{cc}
	O&B \\
	C&O
	\end{array}\right)\in \mathcal{B}(\mathcal{H}_1 \oplus \mathcal{H}_2).$ Then 
	\[ w^2(\mathbb{T})\leq  \min_{0\leq \alpha\leq 1} \max \left\{\|(1-\alpha)BB^*+\alpha C^*C\|,\|\alpha B^*B+ (1-\alpha)CC^*\| \right\}.\]
\end{theorem}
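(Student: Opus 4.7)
The plan is to apply Corollary \ref{inequality1} directly to the $2\times 2$ block operator $\mathbb{T}$, taking advantage of the fact that off-diagonal block operators have particularly nice associated positive operators $|\mathbb{T}|^2$ and $|\mathbb{T}^*|^2$.

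First I would compute the adjoint $\mathbb{T}^* = \left(\begin{smallmatrix} O & C^*\\ B^* & O\end{smallmatrix}\right)$ and observe that the two products become block diagonal:
\[
|\mathbb{T}|^2 = \mathbb{T}^*\mathbb{T} = \begin{pmatrix} C^*C & O\\ O & B^*B\end{pmatrix}, \qquad |\mathbb{T}^*|^2 = \mathbb{T}\mathbb{T}^* = \begin{pmatrix} BB^* & O\\ O & CC^*\end{pmatrix}.
\]
Hence, for any $\alpha\in[0,1]$, the convex combination is also block diagonal:
\[
\alpha|\mathbb{T}|^2+(1-\alpha)|\mathbb{T}^*|^2 = \begin{pmatrix} \alpha C^*C+(1-\alpha)BB^* & O\\ O & \alpha B^*B+(1-\alpha)CC^*\end{pmatrix}.
\]

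The next step is to use the standard fact that the operator norm of a block diagonal operator equals the maximum of the norms of its diagonal blocks, yielding
\[
\bigl\|\alpha|\mathbb{T}|^2+(1-\alpha)|\mathbb{T}^*|^2\bigr\| = \max\bigl\{\|\alpha C^*C+(1-\alpha)BB^*\|,\,\|\alpha B^*B+(1-\alpha)CC^*\|\bigr\}.
\]
Applying Corollary \ref{inequality1} to $\mathbb{T}\in\mathcal{B}(\mathcal{H}_1\oplus\mathcal{H}_2)$ gives $w^2(\mathbb{T})\leq \min_{0\leq\alpha\leq 1}\|\alpha|\mathbb{T}|^2+(1-\alpha)|\mathbb{T}^*|^2\|$, and inserting the expression above finishes the proof. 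To match exactly the form stated in the theorem, I would re-index by $\alpha\mapsto 1-\alpha$ inside the minimum, which simply swaps the roles of the two terms without changing the value.

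There is no serious obstacle here: the argument is a direct reduction to Corollary \ref{inequality1} via two cosmetic ingredients, namely the block-diagonal structure of $|\mathbb{T}|^2$ and $|\mathbb{T}^*|^2$ for off-diagonal $\mathbb{T}$, and the formula for the norm of a block-diagonal operator. The only mild care needed is the bookkeeping of which block ends up with $\alpha$ and which with $1-\alpha$, but this is handled by the reindexing above.
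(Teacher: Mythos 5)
Your proposal is correct and follows essentially the same route as the paper: compute that $\alpha\mathbb{T}^*\mathbb{T}+(1-\alpha)\mathbb{T}\mathbb{T}^*$ is block diagonal with blocks $(1-\alpha)BB^*+\alpha C^*C$ and $\alpha B^*B+(1-\alpha)CC^*$, take the norm as the maximum of the block norms, and apply Corollary \ref{inequality1}. The final reindexing $\alpha\mapsto 1-\alpha$ you mention is not even needed, since your convex combination already matches the stated form.
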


\begin{proof}
By short calculations, we get \[\alpha \mathbb{T}^*\mathbb{T}+(1-\alpha)\mathbb{T}\mathbb{T}^*= \left(\begin{array}{cc}
	(1-\alpha)BB^*+\alpha C^*C &O \\
	O&\alpha B^*B+ (1-\alpha)CC^*
	\end{array}\right).\]  It follows from Corollary \ref{inequality1}  that 
	\begin{eqnarray*}
	 w^2(\mathbb{T}) &\leq& \min_{0\leq \alpha\leq 1} \left \| \left(\begin{array}{cc}
	(1-\alpha)BB^*+\alpha C^*C &O \\
	O&\alpha B^*B+ (1-\alpha)CC^*
	\end{array}\right)\right \|\\
	&=& \min_{0\leq \alpha\leq 1} \max \left\{\|(1-\alpha)BB^*+\alpha C^*C\|,\|\alpha B^*B+ (1-\alpha)CC^*\| \right\}.
	\end{eqnarray*}
This completes the proof of the theorem.
\end{proof}

Next we need the following  two lemmas.

\begin {lemma}$($\cite[Lemma 2.4]{bhunia3}$)$ \label{lemma4}
Let $\mathbb{S}_{n}=\left(\begin{array}{ccccc}
0&0&\ldots&0&0\\
1&0&\ldots&0&0\\
0&1&\ldots&0&0\\
\vdots&\vdots& &\vdots&\vdots\\
0&0&\ldots&1&0\\
\end{array}\right)_{n\times n}$. Then the numerical radius of $\mathbb{S}_n$ is given by  $$w(\mathbb{S}_n)=\cos \left(\frac{\pi}{n+1}\right).$$
\end{lemma}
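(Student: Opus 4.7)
The plan is to exploit the rotational symmetry of $W(\mathbb{S}_n)$ to reduce the numerical radius computation to computing the operator norm of the Hermitian part, which is a tractable tridiagonal matrix.

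First, I would show that $W(\mathbb{S}_n)$ is a disk centered at the origin. The key observation is that $\mathbb{S}_n$ is a weighted shift and hence unitarily equivalent to each of its unimodular rotates. Concretely, for $\theta \in \mathbb{R}$ let $D_\theta = \mathrm{diag}(1, e^{i\theta}, e^{2i\theta}, \ldots, e^{i(n-1)\theta})$. A one-line index computation gives $D_\theta^{-1} \mathbb{S}_n D_\theta = e^{-i\theta} \mathbb{S}_n$, and since $D_\theta$ is unitary, $W(\mathbb{S}_n) = W(e^{-i\theta}\mathbb{S}_n) = e^{-i\theta} W(\mathbb{S}_n)$ for every $\theta$. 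Because $W(\mathbb{S}_n)$ is convex and rotationally invariant about the origin, it must be a closed disk centered at $0$, say of radius $\rho = w(\mathbb{S}_n)$.

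Second, I would use this to replace $w(\mathbb{S}_n)$ by an operator norm. Writing $H = \mathrm{Re}(\mathbb{S}_n) = \tfrac{1}{2}(\mathbb{S}_n + \mathbb{S}_n^*)$, we have $W(H) = \mathrm{Re}\,W(\mathbb{S}_n) = [-\rho, \rho]$, and since $H$ is Hermitian, $\|H\| = \sup\{|\lambda| : \lambda \in W(H)\} = \rho$. Therefore $w(\mathbb{S}_n) = \|H\|$, and the problem reduces to diagonalising $H$.

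Third, I would identify $2H$ as the adjacency matrix $A$ of the path graph $P_n$: the $n \times n$ symmetric tridiagonal matrix with zeros on the diagonal and ones on the sub- and super-diagonals. The eigenvalue equation $Av = \mu v$ becomes the three-term recurrence $v_{k-1} + v_{k+1} = \mu v_k$ with boundary conditions $v_0 = v_{n+1} = 0$, whose solutions are $v_k^{(j)} = \sin(jk\pi/(n+1))$ with eigenvalues $\mu_j = 2\cos(j\pi/(n+1))$ for $j = 1, \ldots, n$. Hence $\|H\| = \cos(\pi/(n+1))$, which yields the claimed formula $w(\mathbb{S}_n) = \cos(\pi/(n+1))$.

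The only nontrivial step is the first: establishing that the numerical range is a disk centered at $0$. Once that is in hand, the reduction to a Hermitian norm and the closed-form spectrum of the path-graph tridiagonal matrix are standard. An alternative route would be to avoid the symmetry argument and instead use the identity $w(T) = \sup_{\theta}\|\mathrm{Re}(e^{i\theta}T)\|$, noting that for every $\theta$ the matrix $\mathrm{Re}(e^{i\theta}\mathbb{S}_n)$ is diagonally unitarily similar to $H$, hence has the same norm; this again delivers $w(\mathbb{S}_n) = \|H\|$ without invoking any geometric fact about disks.
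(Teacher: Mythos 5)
Your proof is correct in every step: the diagonal unitary $D_\theta$ does satisfy $D_\theta^{-1}\mathbb{S}_nD_\theta=e^{-i\theta}\mathbb{S}_n$, so $W(\mathbb{S}_n)$ is rotationally invariant and (being convex and compact) is a closed disk centered at $0$; consequently $w(\mathbb{S}_n)=\|\mathrm{Re}(\mathbb{S}_n)\|$, and the spectrum of the path-graph adjacency matrix $\{2\cos(j\pi/(n+1)):j=1,\dots,n\}$ gives the stated value. Note, however, that the paper itself offers no proof of this lemma: it is imported verbatim as a citation to \cite[Lemma 2.4]{bhunia3}, so there is no in-paper argument to compare against. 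Your write-up is essentially the standard proof of this classical fact (it goes back to the analysis of the numerical range of the nilpotent Jordan block), and it is the natural self-contained substitute for the citation. Your closing remark is also well taken: the identity $w(T)=\sup_\theta\|\mathrm{Re}(e^{i\theta}T)\|$ combined with the same diagonal similarity gives the reduction $w(\mathbb{S}_n)=\|\mathrm{Re}(\mathbb{S}_n)\|$ without needing the Toeplitz--Hausdorff convexity theorem, which is marginally more economical. The only cosmetic point worth tightening is in the second step: one should say that $W(H)$ is the real projection of $W(\mathbb{S}_n)$, hence equals $[-\rho,\rho]$ because the numerical range is the full closed disk of radius $\rho$ (not merely contained in it), which is exactly what the rotational-invariance argument delivers.
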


\begin{lemma}$($\cite[Cor. 2]{abu2}$)$\label{lemma5}
Let $\mathcal{H}_1$, $\mathcal{H}_2$ be two complex Hilbert spaces and let $\mathbb{S}=\left(\begin{array}{cc}
	A&B \\
	C&D
	\end{array}\right)\in \mathcal{B}(\mathcal{H}_1 \oplus \mathcal{H}_2).$ Then
	\[w(\mathbb{S})\leq \frac{1}{2}\left(w(A)+w(D)\right)+\frac{1}{2}\sqrt{\left(w(A)-w(D)\right)^2+4w^2(\mathbb{T})},\]
	where $\mathbb{T}=\left(\begin{array}{cc}
	O&B \\
	C&O
	\end{array}\right).$
\end{lemma}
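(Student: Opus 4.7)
The plan is to work directly with the quadratic-form definition of the numerical radius, decomposing a unit vector in $\mathcal{H}_1\oplus\mathcal{H}_2$ into its two components and treating the diagonal and off-diagonal contributions separately. Write $\mathbb{S}=\mathbb{D}+\mathbb{T}$ where $\mathbb{D}=\left(\begin{smallmatrix} A & O \\ O & D \end{smallmatrix}\right)$. For any unit vector $x=(x_1,x_2)\in\mathcal{H}_1\oplus\mathcal{H}_2$, so that $\|x_1\|^2+\|x_2\|^2=1$, one has
\[
\langle \mathbb{S}x,x\rangle=\langle Ax_1,x_1\rangle+\langle Dx_2,x_2\rangle+\bigl(\langle Bx_2,x_1\rangle+\langle Cx_1,x_2\rangle\bigr).
\]

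First I would bound the two diagonal terms by $|\langle Ax_1,x_1\rangle|\leq w(A)\|x_1\|^2$ and $|\langle Dx_2,x_2\rangle|\leq w(D)\|x_2\|^2$. The key step is the bound for the off-diagonal quantity. Writing $x_i=t_i y_i$ with $\|y_i\|=1$ and $t_1^2+t_2^2=1$, the cross term equals $t_1 t_2\bigl(\langle By_2,y_1\rangle+\langle Cy_1,y_2\rangle\bigr)$. Now, for any phase $\phi\in\mathbb{R}$, the vector $z_\phi=\tfrac{1}{\sqrt{2}}(y_1,e^{i\phi}y_2)$ is a unit vector in $\mathcal{H}_1\oplus\mathcal{H}_2$, and a short computation gives $\langle \mathbb{T}z_\phi,z_\phi\rangle=\tfrac{1}{2}\bigl(e^{i\phi}\langle By_2,y_1\rangle+e^{-i\phi}\langle Cy_1,y_2\rangle\bigr)$. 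Choosing $\phi$ to align the two phases, the supremum over $\phi$ of $|\langle \mathbb{T}z_\phi,z_\phi\rangle|$ equals $\tfrac{1}{2}(|\langle By_2,y_1\rangle|+|\langle Cy_1,y_2\rangle|)\geq\tfrac{1}{2}|\langle By_2,y_1\rangle+\langle Cy_1,y_2\rangle|$, and hence $|\langle By_2,y_1\rangle+\langle Cy_1,y_2\rangle|\leq 2w(\mathbb{T})$.

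Combining these estimates gives $|\langle \mathbb{S}x,x\rangle|\leq w(A)t_1^2+w(D)t_2^2+2t_1 t_2\,w(\mathbb{T})$. The final step is to maximize the right-hand side over the constraint $t_1^2+t_2^2=1$; setting $t_1=\cos\theta$, $t_2=\sin\theta$ and rewriting via double-angle identities yields
\[
\frac{w(A)+w(D)}{2}+\frac{w(A)-w(D)}{2}\cos(2\theta)+w(\mathbb{T})\sin(2\theta),
\]
and the elementary inequality $a\cos\psi+b\sin\psi\leq\sqrt{a^2+b^2}$ produces precisely the claimed upper bound. Taking the supremum of $|\langle \mathbb{S}x,x\rangle|$ over unit $x$ then completes the proof.

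The main obstacle is the cross-term estimate: a naive Cauchy--Schwarz bound by $\|B\|$ and $\|C\|$ would be much too weak, and what makes the argument tight is the freedom to pick $y_1$ and $y_2$ independently on the two summands and to absorb their relative phase into the choice of the test vector $z_\phi$ for $\mathbb{T}$; without the phase optimization one only sees the real part of $\langle By_2,y_1\rangle+\langle Cy_1,y_2\rangle$, which is insufficient. The remaining trigonometric optimization is routine.
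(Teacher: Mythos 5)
The paper gives no proof of this lemma at all --- it is imported verbatim from Abu-Omar and Kittaneh \cite[Cor. 2]{abu2} --- so there is no in-text argument to compare yours against; what matters is whether your self-contained derivation is sound, and it is. The computation $\langle \mathbb{T}z_\phi,z_\phi\rangle=\tfrac{1}{2}\bigl(e^{i\phi}\langle By_2,y_1\rangle+e^{-i\phi}\langle Cy_1,y_2\rangle\bigr)$ together with the phase alignment correctly gives $|\langle By_2,y_1\rangle|+|\langle Cy_1,y_2\rangle|\leq 2w(\mathbb{T})$, hence the bound $|\langle \mathbb{S}x,x\rangle|\leq w(A)t_1^2+w(D)t_2^2+2t_1t_2\,w(\mathbb{T})$, and the trigonometric maximization then produces exactly the largest eigenvalue of $\left(\begin{smallmatrix} w(A) & w(\mathbb{T})\\ w(\mathbb{T}) & w(D)\end{smallmatrix}\right)$, which is the stated right-hand side. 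The only point you leave implicit is the degenerate case $x_1=0$ or $x_2=0$, where the factorization $x_i=t_iy_i$ with $\|y_i\|=1$ is unavailable; there the cross term vanishes and $|\langle \mathbb{S}x,x\rangle|\leq\max\{w(A),w(D)\}$, which is dominated by the claimed bound since the square root term is at least $|w(A)-w(D)|$, so the argument is complete.
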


Now, we are in a position to present our desired estimation for the bounds of the zeros of $p(z)$.

\begin{theorem}\label{theorem5}
Let $\lambda$ be any zero of $p(z)$. Then 
\begin{eqnarray*}
|\lambda| &\leq& \frac{1}{2}\left(|a_{n-1}|+\cos \left(\frac{\pi}{n}\right) \right)+\frac{1}{2}\sqrt{\left(|a_{n-1}|-\cos \left(\frac{\pi}{n}\right)\right)^2+2\left (1 +\sum_{i=0}^{n-2}|a_i|^2\right)}. 
\end{eqnarray*}
\end{theorem}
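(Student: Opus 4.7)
The plan is to exploit the standard reduction $|\lambda|\leq r(C(p))\leq w(C(p))$ for the Frobenius companion matrix $C(p)$, and then to bound $w(C(p))$ from above via a carefully chosen $2\times 2$ block decomposition. I would partition $C(p)$ on $\mathbb{C}\oplus\mathbb{C}^{n-1}$ as
\[
C(p) = \begin{pmatrix} A & B \\ C & D \end{pmatrix},
\qquad
A = (-a_{n-1}),\ B = (-a_{n-2},\ldots,-a_0),\ C = (1,0,\ldots,0)^{T},
\]
so that the lower-right block $D$ coincides exactly with the $(n-1)\times(n-1)$ nilpotent shift matrix $\mathbb{S}_{n-1}$. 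The point of this choice is that Lemma \ref{lemma4} gives the closed form $w(D)=\cos(\pi/n)$, while trivially $w(A)=|a_{n-1}|$.

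With this splitting in place, I would apply Lemma \ref{lemma5} to obtain
\[
w(C(p)) \leq \tfrac{1}{2}\bigl(|a_{n-1}|+\cos(\pi/n)\bigr) + \tfrac{1}{2}\sqrt{\bigl(|a_{n-1}|-\cos(\pi/n)\bigr)^{2} + 4w^{2}(\mathbb{T})},
\]
where $\mathbb{T}=\begin{pmatrix} O & B \\ C & O \end{pmatrix}$. The only remaining task is to control $w^{2}(\mathbb{T})$, and this is precisely what Theorem \ref{theorem4} is designed for.

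Taking $\alpha = 1/2$ in Theorem \ref{theorem4} yields
\[
w^{2}(\mathbb{T}) \leq \tfrac{1}{2}\max\bigl\{\|BB^{*}+C^{*}C\|,\ \|B^{*}B+CC^{*}\|\bigr\}.
\]
The first norm is a scalar equal to $1+\sum_{i=0}^{n-2}|a_{i}|^{2}$, and for the second the triangle inequality gives $\|B^{*}B+CC^{*}\|\leq \|B\|^{2}+\|C\|^{2} = \sum_{i=0}^{n-2}|a_{i}|^{2}+1$. Consequently $4w^{2}(\mathbb{T})\leq 2\bigl(1+\sum_{i=0}^{n-2}|a_{i}|^{2}\bigr)$, and substituting this bound into the displayed inequality produces the advertised estimate on $|\lambda|$.

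The proof is essentially careful bookkeeping: the computations of $BB^{*}$, $C^{*}C$, $B^{*}B$ and $CC^{*}$ are routine, and the application of Lemma \ref{lemma5} and Theorem \ref{theorem4} is mechanical once the blocks are in place. The only conceptual step is recognising that the natural block splitting of $C(p)$ puts the canonical shift $\mathbb{S}_{n-1}$ in the $(2,2)$ position, which unlocks Lemma \ref{lemma4}; I do not anticipate any genuine obstacle beyond making this identification and fixing the convenient choice $\alpha=1/2$ to symmetrise the two norms appearing in Theorem \ref{theorem4}.
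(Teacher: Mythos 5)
Your proposal is correct and follows essentially the same route as the paper: the same block decomposition of $C(p)$ with $D=\mathbb{S}_{n-1}$, the same use of Lemma \ref{lemma4}, Lemma \ref{lemma5} and Theorem \ref{theorem4}, with your explicit choice $\alpha=1/2$ being exactly where the paper's minimization over $\alpha$ lands. (Minor point: the $(1,1)$ entry of $C(p)$ is $-a_{n-1}$, as you write; this does not affect $w(A)=|a_{n-1}|$.)
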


\begin{proof}
Let  $C(p)=\left(\begin{array}{cc}
	A&B \\
	C&D
\end{array}\right)$, where $ A = (a_{n-1})_{1\times 1}$, $C=\left(\begin{array}{c}
1 \\
0\\
\vdots\\
0
\end{array}\right)_{n-1 \times 1} $,\\  $B=(-a_{n-2} -a_{n-3} \ldots -a_1 -a_0 )_{1\times n-1}$  and $D=\mathbb{S}_{n-1}.$ 
Then, by using Theorem \ref{theorem4}, we get that
\begin{eqnarray*}
w\left(\begin{array}{cc}
	O&B \\
	C&O
\end{array}\right) &\leq&  \min_{0\leq \alpha\leq 1} \max \left\{\sqrt{ \alpha +(1-\alpha)\sum_{i=0}^{n-2}|a_i|^2},\sqrt{ (1-\alpha) +\alpha \sum_{i=0}^{n-2}|a_i|^2} \right\}. 
\end{eqnarray*}
Clearly,  $$\min_{0\leq \alpha\leq 1} \max \left\{{ \alpha +(1-\alpha)\sum_{i=0}^{n-2}|a_i|^2},{ (1-\alpha) +\alpha \sum_{i=0}^{n-2}|a_i|^2} \right\}={ \frac{1}{2}\left (1 +\sum_{i=0}^{n-2}|a_i|^2\right)}.$$
Therefore, from Lemma \ref{lemma5}, we get
\[w(C(p))\leq \frac{1}{2}\left(w(A)+w(D)\right)+\frac{1}{2}\sqrt{\left(w(A)-w(D)\right)^2+2\left (1 +\sum_{i=0}^{n-2}|a_i|^2\right)}.\]
Clearly, $w(A)=|a_{n-1}|$ and $w(D)=\cos \left(\frac{\pi}{n}\right)$, by Lemma \ref{lemma4}. Hence, we get the required inequality of the theorem.

\end{proof}

\begin{example}
We consider a monic polynomial $p(z)=z^5+{2}z^4+{\rm i}z^2-{\rm i}.$ Then $\sum_{i=0}^{3}|a_i|^2=2.$ Therefore, it follows from Theorem \ref{theorem5} that if $\lambda$ be any zero of $p(z)$ then $$|\lambda|\leq \frac{1}{2}\left [2+\cos\left(\frac{\pi}{5}\right)+\sqrt{\left(2-\cos\left(\frac{\pi}{5}\right)\right)^2+6}\right]\approx 2.76634921105.$$

Following  \cite[Section 3]{bhunia3}, we list the bounds for the zeros of the polynomial $p(z)$  to conclude that the bound obtained in Theorem \ref{theorem5} is better than the existing bounds.\\
\begin{center}
\begin{tabular}{ll}
	Cauchy & 3\\
	Montel  & 4 \\
	Alpin et.al. & 3\\
	Fujii and Kubo & 3.090\\
	Paul and Bag & 2.810\\
	Al-Dolat et. al. & 3.325\\
	Abu-Omar and Kittaneh & 2.914\\
	Theorem \ref{theorem5}  & \textbf{2.766}
\end{tabular}
\end{center}
\end{example}

\bibliographystyle{amsplain}

\end{document}